\gdef\rtitle{}
\gdef\rauthor{}
\newtheorem{theorem}{Theorem}[section]
\newtheorem{lemma}[theorem]{Lemma}
\newtheorem{corollary}[theorem]{Corollary}
\newtheorem{example}[theorem]{Example}
\newcommand{\m}{\frak{m}}
\newcommand{\reg}{\operatorname{reg}}
\newcommand{\projdim}{\operatorname{projdim}}
\newcommand{\Tor}{\text{Tor}}
\begin{document}

\title{Graded Betti numbers of ideals with linear quotients}
\author{Leila Sharifan and Matteo Varbaro}
\address{Department of Pure Mathematics,  Faculty of Mathematics and  Computer Science, Amirkabir University of Technology, 424, Hafez Ave.,  Tehran 15914, Iran}
\email{leila-sharifan@aut.ac.ir}
\address{Dipartimento di Matematica, Universit\'a  di Genova, Via Dodecaneso 35, I-16146 Genova, Italy}
\email{varbaro@dima.unige.it} \subjclass{}
\date{}
\keywords{}
 \maketitle


\begin{abstract}
In this paper we show that every ideal with linear quotients is
componentwise linear. We also generalize the Eliahou-Kervaire
formula for graded Betti numbers of stable ideals to  homogeneous
ideals with linear quotients.
\end{abstract}

\markboth{\large Graded Betti numbers of ideals with linear quotients}{\large Leila Sharifan and Matteo Varbaro}

\section{Introduction}

Let $K$ be a field, $S=K[x_1,...,x_n]$ the polynomial ring in $n$
variables, and $I\subset S$ a homogeneous ideal. Let
$\{f_1,...,f_m\}$ be a system of homogeneous generators for $I$. We
say that $I$  has {\it linear quotients with respect to the elements
$f_1,...,f_m$} if the ideal $\langle f_1,...,f_{i-1} \rangle :f_i$
is generated by linear forms for all $i=2,\ldots ,m$ (notice that
this property depends on the order of the generators). Monomial
ideals with linear quotients were introduced in \cite{HT} by Herzog
and Takayama and have strong combinatorial implication, see the
paper of Soleyman Jahan and Zheng \cite{SZ}.

In this paper we study the minimal free resolution of a graded ideal $I$ with linear quotients
 with respect to a {\it minimal} system of homogeneous generators $\{f_1, \ldots , f_m\}$ (in this case we simply say
 that {\it $I$ has linear quotients}). It is known that $I$ is componentwise linear provided that $\deg(f_1)\leq ...\leq \deg(f_m)$, see the book  of Herzog and Hibi \cite{HH2} . We prove that $I$ is componentwise linear without the additional assumption on the degrees, giving an affirmative answer to a question of Herzog (see Corollary \ref{linear quotient}). Our result is a generalization of \cite[Corollary 2.8]{SZ}, where the authors prove componentwise linearity in  case  $I$ is a monomial ideal and $f_1, \ldots , f_m$ are monomials. \\
A large class of monomial ideals with linear quotients is the class
of stable ideals. A minimal free resolution of stable ideals  was
constructed by Eliahou and Kervaire in \cite{EK}, who also give  an
explicit formula for their graded Betti numbers. For an ideal with
linear quotients we give a formula for its Betti numbers that
generalizes the formula by Eliahou and Kervaire. We express the graded Betti numbers in terms of the numbers $\deg(f_i)$ and the minimal number of generators of $\langle f_1, \ldots, f_{i-1}\rangle :f_i$, for $i=1, \ldots ,m$. As a consequence we will also obtain the Castelnuovo-Mumford regularity and the projective dimension of $I$. \\

\section{Graded Ideals With Linear Quotients}

Let $I \subset S$ be a graded ideal  and  $\{f_1,...,f_m\}$ be a minimal system of homogeneous generators for $I$ such that $I$  has linear quotients with respect to $f_1,...,f_m$.
Notice that if an ideal has linear quotients with respect to a minimal system of homogeneous generators then it need not have linear quotients with respect to all minimal homogeneous system of generators (see Example \ref{syst gen}).\\
It is easy to see that $\deg(f_i )\geq \min\{\deg(f_1), . . .
,\deg(f_{i-1})\}$ for $i = 2, . . . ,m$. Particularly, $\deg(f_1)
\leq \deg(f_i)$ for every $i=1, \ldots ,m$. But in general, the sequence $\deg(f_1),...,\deg(f_{i-1})$ of degrees need not be increasing.\\
For example, the ideal $\langle xy, xy^3z + y^4z - y^3z^2, x^3 +
x^2y - x^2z, x^2z^3\rangle$ has linear quotients in the given order,
but $\deg(xy^3z + y^4z - y^3z^2)>\deg(x^3 + x^2y - x^2z)$.

\begin{example}\label{syst gen}
Set $\m=\langle x_1, \ldots ,x_n \rangle \subset S$ the maximal irrelevant ideal. It is easy to see that $\m^k$ has linear quotients with respect to its unique system of monomial generators for every natural number $k$ (for example ordering the monomials lexicographically).\\
If $K$ is large enough, we can choose a minimal system of
homogeneous generators $\{ g_1, \ldots, g_t\}$ for $\m^k$ such that
$\operatorname{GCD}(g_i,g_j)=1$ for each $i \neq j$. Hence it is
clear that, if $k\geq 2$, $\m^k$ does not have linear quotients with
respect to $\{g_1, \ldots ,g_t\}$, for any order, otherwise the
unique factorization would be contradicted.
\end{example}

If $J$ is a graded ideal of $S$, then we write $J_{\langle
j\rangle}$ for the ideal generated by all homogeneous polynomials of
degree $j$ belonging to $J$. Moreover, we write $J_{\leq k}$ for the
ideal generated by all homogeneous polynomials in $J$ whose degree
is less than or equal to $k$. We will say that $J$ is componentwise
linear if $J_{\langle j\rangle}$ has a $j$-linear resolution for all
$j$.

 In \cite[Lemma 2.1]{SZ} the authors show that for any monomial ideal $J$ with linear quotients with respect to
the unique minimal system $G(J)$ of monomial generators of $J$,
there exists a degree increasing
 order of the elements $u_1, . . . ,u_m$ of $G(J)$ such that $J$  has linear quotients with respect to this order. As a corollary they are able to prove that $J$ is componentwise linear. It is worth remarking that if $J$ has linear quotients with respect to a non-minimal system of generators, then $J$ is not necessarily componentwise linear (see Example \ref{not minimal}).
We prove the result of \cite{SZ} for an arbitrary graded ideal with
linear quotients using completely different methods.

In the following for a $\mathbb{Z}$-graded module $M$ we write $M(a)$ for the module
obtained by {\it shifting degrees} by $a$, for any $a \in \mathbb{Z}$: i.e. $M(a)$ is the module $M$ with the grading defined as $M(a)_b=M_{a+b}$.

\vspace {.2cm}

 For our purpose we need the next lemma.

 \begin{lemma} \label{main lemma}
Let $J=\langle f_1,...,f_m\rangle$ be a componentwise linear  ideal and let $I=J+\langle f\rangle$ where $f$ is a homogenous form of degree $d$. If the ideal $J:f$ is generated by linear forms then  $I_{\langle j\rangle}$ has  a $j$-linear resolution for each $j\leq d$ provided that $f_1,...,f_m,f$ is a minimal system of generators for $I$.
 \end{lemma}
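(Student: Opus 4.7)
The plan is to handle the two ranges of $j$ separately. For $j < d$, since $f$ has degree $d > j$, the form $f$ contributes nothing to $I_j$, so $I_{\langle j\rangle} = J_{\langle j\rangle}$; by componentwise linearity of $J$, this already has a $j$-linear resolution. The whole content of the lemma is thus in the case $j = d$.

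For $j = d$ I would work with the short exact sequence
\begin{equation*}
0 \to J_{\langle d\rangle} \to I_{\langle d\rangle} \to I_{\langle d\rangle}/J_{\langle d\rangle} \to 0,
\end{equation*}
together with the isomorphism $I_{\langle d\rangle}/J_{\langle d\rangle} \cong (S/(J_{\langle d\rangle}:f))(-d)$ induced by multiplication by $f$. Since $J_{\langle d\rangle}$ has a $d$-linear resolution by the componentwise linearity hypothesis, inspection of the long exact sequence in $\Tor(K,-)$ reduces the lemma to showing that $(S/(J_{\langle d\rangle}:f))(-d)$ also has a $d$-linear resolution; equivalently, that the ideal $J_{\langle d\rangle}:f$ is generated by linear forms (its resolution is then the Koszul complex on a maximal linearly independent subset of those forms, hence linear).

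The main obstacle is establishing that $J_{\langle d\rangle}:f$ is indeed generated by linear forms. One only knows a priori the inclusion $J_{\langle d\rangle}:f \subseteq J:f = (\ell_1, \dots, \ell_r)$ with the $\ell_i$ linear. Define $P := \{\ell \in (J:f)_1 \mid \ell f \in J_{\langle d\rangle}\}$, the subspace of those linear forms that actually lie in $J_{\langle d\rangle}:f$; clearly $(P) \subseteq J_{\langle d\rangle}:f$. I would aim to prove the reverse inclusion by induction on degree: writing $g \in (J_{\langle d\rangle}:f)_k$ as $g = \sum p_i \ell_i$, the hypothesis $gf \in J_{\langle d\rangle}$ forces the contributions of the $\ell_i \notin P$---those whose product with $f$ genuinely involves a minimal degree-$(d+1)$ generator of $J$---to cancel modulo $J_{\langle d\rangle}$. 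Such a cancellation corresponds to a syzygy among the minimal degree-$(d+1)$ generators of $J$ together with the generators of lower degree, and componentwise linearity of $J$ (applied through $J_{\langle d+1\rangle}$) forces this syzygy to be linear, which is exactly what allows rewriting $g$ as an element of $(P)$.

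Once the claim about $J_{\langle d\rangle}:f$ is in hand, the conclusion is immediate: the short exact sequence gives $\reg I_{\langle d\rangle} \le \max(\reg J_{\langle d\rangle}, \reg(I_{\langle d\rangle}/J_{\langle d\rangle})) = d$, and since $I_{\langle d\rangle}$ is generated in degree $d$---the minimality of $\{f_1,\dots,f_m,f\}$ ensures $f \notin J$, so $f$ really does contribute a new degree-$d$ generator---the resolution is $d$-linear.
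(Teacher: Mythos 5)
Your reduction is the same as the paper's: split off $j<d$ as trivial, and for $j=d$ use the exact sequence $0\to J_{\langle d\rangle}\to I_{\langle d\rangle}\to (S/(J_{\langle d\rangle}:f))(-d)\to 0$ together with the long exact sequence of $\Tor$, so that everything hinges on $J_{\langle d\rangle}:f$ being generated by linear forms. But that hinge is exactly where your argument stops being a proof. The paragraph about the contributions of the $\ell_i\notin P$ ``cancelling modulo $J_{\langle d\rangle}$'', the resulting ``syzygy among the minimal degree-$(d+1)$ generators'', and componentwise linearity of $J_{\langle d+1\rangle}$ ``forcing this syzygy to be linear'' is a plan, not an argument: you do not say what module the syzygy lives in, why linearity of the syzygies of $J_{\langle d+1\rangle}$ controls relations that mix generators of different degrees, or how a linear syzygy would let you rewrite $g$ as an element of $(P)$. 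As written, the central step is unproved.

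Moreover, the induction you sketch is aimed at the wrong target. The minimality of $\{f_1,\dots,f_m,f\}$ --- which you invoke only at the very end, to get $f\notin J$ --- is precisely what dissolves the ``main obstacle'' at the outset: it shows that $P=(J:f)_1$, i.e.\ there are no $\ell_i\notin P$ at all, and in fact $J_{\langle d\rangle}:f=J:f$. Indeed, write $L=J:f=\langle \ell_1,\dots,\ell_r\rangle$ and expand $\ell_i f=\sum_k\lambda_k f_{t_k}$ in the minimal generators of $J$, with $\deg\lambda_k+\deg f_{t_k}=d+1$. If some $\lambda_k$ were a nonzero constant, then $f_{t_k}$ would lie in the ideal generated by the remaining $f_j$ together with $f$, contradicting the minimality of the generating set of $I$. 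Hence every $\deg\lambda_k\ge 1$, so every $\deg f_{t_k}\le d$ and $\ell_i f\in (J_{\le d})_{d+1}=(J_{\langle d\rangle})_{d+1}$ (the last equality because $(J_{\le d})_{d+1}=S_1J_d$). Therefore $L\subseteq J_{\langle d\rangle}:f\subseteq L$, the colon ideal is generated by the linear forms $\ell_1,\dots,\ell_r$, and the Koszul/regularity argument you give at the end then closes the proof. Until you either carry out your syzygy induction in full or replace it by an observation of this kind, your write-up has a genuine gap at its decisive step.
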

\begin{proof}
$I_{\langle j\rangle}=J_{\langle j\rangle}$ for each $j<d$ so it is
enough to prove that $I_{\langle d\rangle}$ has $d$-linear
resolution. First  we show that  $J:f=J_{\langle d\rangle}:f$. It is
clear that $J_{\langle d\rangle}\subseteq J_{\leq d}\subseteq J$.
Therefore $J_{\langle d\rangle}:f\subseteq J_{\leq d }:f\subseteq
J:f$. Set $L=J:f=\langle l_1,...,l_r\rangle$ where $l_i$ are linear
forms. So there exist non-zero homogeneous polynomials
$\lambda_{t_{1i}},...,\lambda_{t_{s_ii}}$ such that
$l_if=\sum_{k=1}^{s_i}\lambda_{t_{ki}}f_{t_{ki}}$ and
$\deg(\lambda_{t_{ki}})+\deg(f_{t_{ki}})=\deg(l_if)=d+1$. Since
$f_1,...,f_m,f$ is a minimal system of generators for $I$,
$\deg(\lambda_{t_{ki}})\geq 1$ and so $\deg(f_{t_{ki}})\leq d$. Then
$l_if\in J_{\leq d}$ but it is clear that $(J_{\langle d\rangle})_{
d+1}=(J_{\leq d})_{d+1}$. So $l_if\in J_{\langle d\rangle}$ and
$J:f=J_{\langle d\rangle}:f$.

Trivially $J_{\langle d\rangle}+\langle f\rangle=I_{\langle d\rangle}$, so by the above discussion
\[ I_{\langle d\rangle}/J_{\langle d\rangle} \cong (S/(J_{\langle d\rangle}:f))(-d)=(S/(J:f))(-d)=S/L(-d).\]
Hence we can consider the short  exact sequence
$$0\to J_{\langle d\rangle}\to I_{\langle d\rangle}\to S/L(-d)\to 0$$
which  yields the long exact sequence of Tor
$$...\to \Tor_i^S(K,J_{\langle d\rangle})_{i+j}\to \Tor_i^S(K,I_{\langle d\rangle})_{i+j}\to \Tor_{i}^S(K,S/L(-d))_{i+j}\to ....$$
The Tor-groups on the right and on the left end of this sequence
vanish for $j \neq d$ since   the corresponding modules have
$d$-linear resolution. This proves that $I_{\langle d\rangle}$ has
$d$-linear resolution.
\end{proof}

We are ready to prove the main theorem of the paper.

\begin{theorem} \label{componentwise}
Let $J=\langle f_1,...,f_m\rangle$ be a componentwise linear  ideal and let $I=J+\langle f\rangle$ where $f$ is a homogeneous form of degree $d$. If the ideal $J:f$ is generated by linear forms then  $I$ is componentwise linear provided that $f_1,...,f_m,f$ is a minimal system of generators for $I$.
\end{theorem}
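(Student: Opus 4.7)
By Lemma \ref{main lemma}, $I_{\langle j \rangle}$ has $j$-linear resolution for $j\leq d$, so only $j>d$ remains. My plan is to use the short exact sequence
\[ 0 \to J_{\langle j \rangle} \to I_{\langle j \rangle} \to I_{\langle j \rangle}/J_{\langle j \rangle} \to 0 \]
and show that the quotient has a $j$-linear resolution too; since $J_{\langle j \rangle}$ already has one by the hypothesis that $J$ is componentwise linear, the Tor long exact sequence will then force $\Tor_i^S(K,I_{\langle j\rangle})_{i+k}=0$ for all $k\ne j$.

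For the quotient, first note that $I_{\langle j\rangle} = J_{\langle j\rangle}+f\m^{j-d}$, because a degree-$j$ element of $I$ has the form $a+bf$ with $a\in J_j$ and $b\in S_{j-d}$. Since $f$ is a nonzerodivisor on $S$,
\[ I_{\langle j\rangle}/J_{\langle j\rangle} \;\cong\; \bigl(\m^{j-d}/(\m^{j-d}\cap(J_{\langle j\rangle}:f))\bigr)(-d). \]
The key identity to prove is $\m^{j-d}\cap(J_{\langle j\rangle}:f)=\m^{j-d-1}L$, where $L:=J:f$. For $\subseteq$, use $(J_{\langle j\rangle}:f)\subseteq L$ together with the elementary fact that $\m^k\cap L=\m^{k-1}L$ whenever $L$ is generated by linear forms. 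For $\supseteq$, I would invoke the refinement $L=J_{\langle d\rangle}:f$ extracted in the proof of Lemma \ref{main lemma}: this gives $Lf\subseteq J_{\langle d\rangle}$, and the $d$-linearity of $J_{\langle d\rangle}$ yields $(J_{\langle d\rangle})_{d+1}=\m J_d$, so $\m^{j-d-1}Lf\subseteq \m^{j-d}J_d\subseteq J_{\langle j\rangle}$, as required.

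With this identification, $I_{\langle j\rangle}/J_{\langle j\rangle}$ is the degree-$\geq j$ truncation of $(S/L)(-d)$. The Koszul complex on a minimal system of linear generators of $L$ gives $(S/L)(-d)$ a $d$-linear resolution, and truncating a module with $d$-linear resolution at a degree $j\ge d$ yields a module with $j$-linear resolution; this is a standard fact, provable by induction on $j-d$ from the short exact sequence $0\to M_{\geq j}\to M_{\geq j-1}\to M_{j-1}\to 0$ (the cokernel being a sum of shifted copies of $K$, which have linear Koszul resolution). Feeding the two $j$-linear modules into the Tor long exact sequence of the original SES completes the proof. The main obstacle I expect is the identity $\m^{j-d}\cap(J_{\langle j\rangle}:f)=\m^{j-d-1}L$: the $\supseteq$ direction really requires the sharper form $L=J_{\langle d\rangle}:f$ from Lemma \ref{main lemma}, not just $L=J:f$, and this is precisely where the linear-quotient hypothesis on $f$ interacts with the linear structure of $J_{\langle d\rangle}$; once this is established, the remainder is formal Tor/truncation machinery.
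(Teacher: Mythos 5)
Your proof is correct, but it takes a genuinely different route from the paper's. The paper handles the components $I_{\langle j\rangle}$ with $j>d$ by an induction on $s=\max_i\deg(f_i)-d$: writing $L=J:f=\langle l_1,\dots,l_r\rangle$, if $r<n$ it completes $l_1,\dots,l_r$ to a basis of $S_1$ by linear forms $e_1,\dots,e_{n-r}$, adjoins the auxiliary generators $g_i=e_if$ of degree $d+1$ (checking that $(J+\langle g_1,\dots,g_{n-r}\rangle)_{\langle d+i\rangle}=I_{\langle d+i\rangle}$, that the colon ideals $(J+\langle g_1,\dots,g_{i-1}\rangle):g_i=L+\langle e_1,\dots,e_{i-1}\rangle$ are again generated by linear forms, and that minimality of the generating system is preserved), and then invokes the inductive hypothesis; the case $d>\max_i\deg(f_i)$ is treated separately via $I_{\langle d+i\rangle}=\m^iI_{\langle d\rangle}$. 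Your argument short-circuits all of this: you identify $I_{\langle j\rangle}/J_{\langle j\rangle}$ for every $j>d$ at once as the truncation $((S/L)(-d))_{\geq j}$, via the identity $\m^{j-d}\cap(J_{\langle j\rangle}:f)=\m^{j-d-1}L=\m^{j-d}\cap L$, and conclude from the standard fact that truncating a module with $d$-linear resolution in degree $j\geq d$ gives a module with $j$-linear resolution. This is more direct (no case split, no auxiliary generators, no second induction) and it makes transparent why, homologically, $I$ differs from $J$ only in the component of degree $d$ --- precisely the structure exploited again in Theorem \ref{Betti}; the paper's construction, on the other hand, stays closer in spirit to the linear-quotients machinery. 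One small correction to your commentary: you assert that the inclusion $\m^{j-d-1}L\subseteq J_{\langle j\rangle}:f$ \emph{requires} the sharper identity $L=J_{\langle d\rangle}:f$ extracted from the proof of Lemma \ref{main lemma}. In fact $L=J:f$ already suffices: each $l_if$ is a homogeneous element of $J$ of degree $d+1$, hence lies in $J_{d+1}$, and for $\mu$ homogeneous of degree $e\geq j-d-1$ one has $\mu l_if\in S_{e-(j-d-1)}\cdot S_{j-d-1}J_{d+1}\subseteq S\cdot J_j\subseteq J_{\langle j\rangle}$. Either way the step is valid, so this is a remark on economy rather than a gap.
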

\begin{proof}
 By Lemma \ref{main lemma}, $I_{\langle j\rangle}$ has $j$-linear resolution for each $j\leq d$. Set $\m=\langle x_1,...,x_
n\rangle$ and  $p=\max\{\deg(f_i)\}$.\\ if $d>p$ then   $I_{\langle d+j\rangle}=\m^j I_{\langle d\rangle}$ for each $j\geq1$. So the exact sequence
\[0\to  I_{\langle d+i\rangle}\to I_{\langle d+i-1\rangle}\to I_{\langle d+i-1\rangle}/ I_{\langle d+i\rangle} \to 0 \]
 and induction on $i$ shows that $I_{\langle d+i\rangle}$ has $d+i$-linear resolution. Therefore, $I$ is componentwise linear ideal.\\ If $d\leq p$, let $s=p-d$. In this case we use induction on $s$ to prove the result. If $s=0$, again by Lemma \ref{main lemma}, $I$ is componentwise linear arguing as above. Now suppose that $s\geq 1$ and  the result is true for $s-1$. Let $L=J:f=\langle l_1,...,l_r\rangle$ minimally generated by $r$ linear forms. If $r=n$ then $\m f\subseteq J$. So $J_{\langle d+i\rangle}=I_{\langle d+i\rangle}$ for each $i\geq1$. Now another application of Lemma \ref{main lemma} shows that  $I$ is a componentwise linear ideal. If $r<n$ we complete $\{l_1,...,l_r\}$ to a $K$-basis of $S_1$ by new linear forms $e_1,...,e_{n-r}$. Set $g_i=e_if$ for $i=1,...,n-r$.  One can easily check that $(J+\langle g_1,...,g_{n-r}\rangle)_{\langle d+i\rangle}=I_{\langle d+i\rangle}$ for each $i\geq 1$. Therefore it is enough to show that $J+\langle g_1,...,g_{n-r}\rangle$ is componentwise linear.

{\bf Claim 1:} For every  $i=1,...,n-r$ we have $(J+\langle g_1,...,g_{i-1}\rangle):g_i=L+\langle e_1,...,e_{i-1}\rangle$.\\
{\it Proof of Claim 1}: Let $h\in (J+\langle
g_1,...,g_{i-1}\rangle):g_i$. So $he_if\in J+\langle
g_1,...,g_{i-1}\rangle$. Therefore there exist suitable coefficients
$\gamma_j$ and $\lambda_j$ such that
$he_if=\sum_{j=1}^m\gamma_jf_j+\sum_{j=1}^{i-1}\lambda_je_jf$. Then
$he_i-\sum_{j=1}^{i-1}\lambda_je_j\in J:f$. So $he_i\in\langle
l_1,...,l_r,e_1,...,e_{i-1}\rangle$. Since
$l_1,...,l_r,e_1,...e_{n-r}$ is a regular sequence $h\in L+\langle
e_1,...,e_{i-1}\rangle$. The other inclusion is clear.

 {\bf Claim 2:} For every  $i=1,...,n-r$ the ideal $J+\langle g_1,...,g_{i}\rangle$ is minimally generated by $\{f_1,...,f_m,g_1,...,g_i\}$.\\
{\it Proof of Claim 2}: Suppose that for some $t$, $f_t\in\langle
f_1,...,\widehat f_t,...,f_m,g_1,...,g_i\rangle$. Then there exist
suitable coefficients $\gamma_j$ and $\lambda_j$ such that
$f_t={\displaystyle{\sum^{m}_{{j=1} \atop {j \neq
t}}}}\gamma_jf_j+\sum_{j=1}^{i}\lambda_jg_j={\displaystyle{\sum^{m}_{{j=1}
\atop {j \neq t}}}}\gamma_jf_j+(\sum_{j=1}^{i}\lambda_je_j)f$. So
$\{f_1,...,f_m,f\}$ is not a minimal system of generators for $I$
which is a contradiction.\\ If for some $t$, $g_t\in\langle
f_1,...,f_m,g_1,...,\widehat g_t,...,g_i\rangle$  then again one can
find coefficients $\gamma_j$ and $\lambda_j$ such that
$g_t=\sum_{j=1}^m\gamma_jf_j+{\displaystyle{\sum^{i}_{{j=1} \atop {j
\neq t}}}}\lambda_jg_j$. So $(e_t-{\displaystyle{\sum^{i}_{{j=1}
\atop {j \neq t}}}}\lambda_je_j)f\in J$ and therefore
$e_t-{\displaystyle{\sum^{i}_{{j=1} \atop {j \neq
t}}}}\lambda_je_j\in L$, which is a contradiction.

Since  $\deg(g_i)=d+1$, Claim 1 and Claim 2 together with the
inductive hypothesis show that $J+\langle g_1,...,g_{i}\rangle$ is
componentwise linear for each $i=1,...,n-r$. This completes the
proof.

\end{proof}

From the above theorem   the answer to a question  by Herzog
follows.

\begin{corollary}\label{linear quotient}
Let $I$ be a homogeneous ideal and  $\{f_1,...,f_m\}$ be a minimal
system of generators for $I$. If $I$  has linear quotients with
respect to  $f_1,...,f_m$ then $I$ is a componentwise linear ideal.

\end{corollary}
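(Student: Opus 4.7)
The plan is to prove Corollary \ref{linear quotient} by induction on $m$, the size of the minimal system of generators, using Theorem \ref{componentwise} as the engine of the induction.

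For the base case $m = 1$, $I = \langle f_1 \rangle$ is a principal ideal. Setting $d = \deg(f_1)$, we have $I_{\langle j \rangle} = 0$ for $j < d$ and $I_{\langle j \rangle} \cong \m^{j-d}(-d)$ for $j \geq d$. Since every power of $\m$ has a linear resolution, each $I_{\langle j \rangle}$ has a $j$-linear resolution, so $I$ is componentwise linear.

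For the inductive step, set $J = \langle f_1, \ldots, f_{m-1} \rangle$ and $f = f_m$, so that $I = J + \langle f \rangle$. Three ingredients need to be verified before Theorem \ref{componentwise} applies. First, $\{f_1, \ldots, f_{m-1}\}$ is a minimal system of generators for $J$: any dependence among them would immediately contradict the minimality of $\{f_1, \ldots, f_m\}$ for $I$. Second, $J$ itself has linear quotients with respect to $f_1, \ldots, f_{m-1}$, since by definition the colon ideals $\langle f_1, \ldots, f_{i-1}\rangle : f_i$ are generated by linear forms for all $i = 2, \ldots, m-1$, these being a subset of the conditions packaged into the hypothesis for $I$. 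Third, $J : f$ is generated by linear forms, which is exactly the case $i = m$ of the same hypothesis. The inductive hypothesis then yields that $J$ is componentwise linear, and Theorem \ref{componentwise} (applied to $J$, $f$, and $I$) gives componentwise linearity of $I$.

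I do not anticipate any genuine obstacle: Theorem \ref{componentwise} has been formulated so as to match the induction step exactly, and the hypotheses descend from $I$ to $J$ without effort. The only small point requiring a word of care is the base case, which reduces to the classical fact that powers of the maximal ideal have linear resolutions; everything else is bookkeeping.
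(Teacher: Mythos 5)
Your proof is correct and follows essentially the same route as the paper: an induction that adds one generator $f_i$ at a time and invokes Theorem \ref{componentwise} at each step, with the base case handled by noting a principal ideal is componentwise linear. You merely spell out the routine verifications (minimality of the truncated generating set, descent of the linear-quotient hypothesis) that the paper leaves implicit, so there is no substantive difference.
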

\begin{proof}
Set $I_i=\langle f_1,...,f_i\rangle$ for $i=1, \ldots ,m$. It is
clear that $I_1$ has a linear resolution. In particular $I_1$ is a
componentwise linear ideal. Therefore, one can apply Theorem
\ref{componentwise} and induction on $i$ to see that each $I_i$ is
componentwise linear for $i=1, \ldots ,m$.
\end{proof}

\begin{example}\label{not minimal}
In Corollary \ref{linear quotient} one cannot remove the hypothesis that $I$ is minimally generated by $\{f_1,\ldots ,f_m\}$.
For instance, let $I$ be the monomial ideal $I=\langle x^2,y^2 \rangle \subset K[x,y]$. Trivially $I=I_{\langle 2 \rangle}$ does not have a $2$-linear resolution, but it has linear quotients with respect to the system of generators $\{x^2,xy^2,y^2\}$.\\
In general we can observe that every $\m$-primary ideal $I\subset
S$, where $\m=\langle x_1, \ldots ,x_n \rangle$, has linear
quotients with respect to a homogeneous system of generators. In
fact  let $h$ be such that $\m^h \subset I$. Then clearly by
ordering the elements of $\m^h$ appropriately  (for example
lexicographically), $I$ has linear quotients with respect to
$\{\m^h, I_{h-1}, \ldots, I_s\}$, where $s$ is minimal such that
$I_s \neq 0$.
\end{example}

In the following we compute the graded Betti numbers of ideals with
linear quotients. For this, we use the formula (1) of Herzog and
Hibi (see \cite[Proposition 1.3]{HH}) for the graded Betti numbers
of an arbitrary componentwise linear ideal $J$.
$$\beta_{i,i+j}(J) =\beta_i(J_{\langle j\rangle})- \beta_i(\m J_{\langle j-1\rangle}) \hspace{.8cm} {\text {for all}}\hspace{.2cm}i, j \eqno{(1)}$$

\begin{theorem}\label{Betti}
Let $J=\langle f_1,...,f_m\rangle$ be a componentwise linear  ideal and let $I=J+\langle f\rangle$ where $f$ is a homogeneous form of degree $d$. If the ideal $J:f$ is minimally  generated by $r$ linear forms then
$$\beta_{i,i+j}(I)=\beta_{i,i+j}(J)\hspace{.8cm} {\text if}\hspace{.2cm} j\neq d, $$
$$\beta_{i,i+d}(I)=\beta_{i,i+d}(J)+{r\choose i},$$
provided that $\{f_1,...,f_m,f\}$ is a minimal system of homogeneous generators for $I$.

\end{theorem}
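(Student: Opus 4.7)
By Theorem~\ref{componentwise}, $I$ is componentwise linear, so formula~(1) applies to both $J$ and $I$. The plan is to compare the Herzog--Hibi formula with the long exact sequence of $\Tor$ coming from
$$0 \to J \to I \to (S/L)(-d) \to 0, \qquad L := J:f.$$
Since $L$ is generated by $r$ linearly independent linear forms, these form a regular sequence and $(S/L)(-d)$ admits the Koszul resolution. Hence $\Tor_i^S(K,(S/L)(-d))_k$ equals $K^{\binom{r}{i}}$ when $k = i+d$, and vanishes otherwise.

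First I would dispose of the easy range $j \notin \{d,d+1\}$. In internal degree $k = i+j$, both $\Tor_i^S(K,(S/L)(-d))_k$ and $\Tor_{i+1}^S(K,(S/L)(-d))_k$ vanish, so the long Tor sequence gives $\beta_{i,i+j}(I) = \beta_{i,i+j}(J)$ immediately.

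Next I would handle $j = d$ via formula~(1). Since $I$ and $J$ agree in degrees below $d$, we have $\m I_{\langle d-1\rangle} = \m J_{\langle d-1\rangle}$. From the proof of Lemma~\ref{main lemma} we have the short exact sequence
$$0 \to J_{\langle d\rangle} \to I_{\langle d\rangle} \to (S/L)(-d) \to 0$$
in which every term has a $d$-linear resolution, so the attached long Tor sequence collapses to short exact sequences in each cohomological degree. This yields $\beta_i(I_{\langle d\rangle}) = \beta_i(J_{\langle d\rangle}) + \binom{r}{i}$, and substituting into formula~(1) gives $\beta_{i,i+d}(I) = \beta_{i,i+d}(J) + \binom{r}{i}$.

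The delicate case is $j = d+1$, the main obstacle, since here neither end term of the long Tor sequence vanishes automatically. The trick is to compute $\beta_{i,i+d}(I)$ a second time from the long Tor sequence in internal degree $i+d$, which produces
$$\beta_{i,i+d}(I) = \beta_{i,i+d}(J) + \binom{r}{i} - \dim\operatorname{image}(\delta_i),$$
where $\delta_i$ denotes the connecting map. Comparing with the value just obtained forces $\dim\operatorname{image}(\delta_i) = 0$ for every $i$. Consequently $\delta_{i+1} = 0$ as well, and the long Tor sequence in internal degree $i+d+1$ finally gives $\beta_{i,i+d+1}(I) = \beta_{i,i+d+1}(J) - \dim\operatorname{image}(\delta_{i+1}) = \beta_{i,i+d+1}(J)$, completing the proof.
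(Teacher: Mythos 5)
Your proof is correct and follows essentially the same route as the paper: the same two short exact sequences, the same use of the Herzog--Hibi formula (1) together with $I_{\langle d-1\rangle}=J_{\langle d-1\rangle}$ for the case $j=d$, and the same resolution of the case $j=d+1$ by feeding the already-computed value $\beta_{i+1,(i+1)+d}(I)$ back into the long exact sequence of $\Tor$. Your phrasing via the vanishing of the connecting homomorphism $\delta_{i+1}$ is just a repackaging of the paper's alternating-sum computation in the five-term exact sequence, so there is nothing substantive to add.
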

\begin{proof}
Consider the exact sequence
$$ 0\to J\to I\to S/L(-d)\to 0 \eqno{(2)} $$
which yields the long exact sequence
 \begin{eqnarray*}
... & \to \Tor_{i+1}^S(K,S/L(-d))_{i+j} & \to  \Tor_i^S(K,J)_{i+j}  \to \\
&& \to \Tor_i^S(K,I)_{i+j}  \to  \Tor_i^S(K,S/L(-d))_{i+j}  \to  ...
\end{eqnarray*}

The Tor-groups on the right and on the left
end of this sequence vanish for $j \neq d,d+1$ since   the
corresponding module has a $d$-linear resolution. This proves that $\beta_{i,i+j}(I)=\beta_{i,i+j}(J)$ if $j\neq d,d+1$.

First consider $j=d$. As discussed in the proof of Lemma \ref{main
lemma}, one has the short exact sequence
$$0\to J_{\langle d\rangle}\to I_{\langle d\rangle}\to S/L(-d)\to 0$$
which  yields the long exact sequence
$$...\to \Tor_i^S(K,J_{\langle d\rangle})_{i+j}\to \Tor_i^S(K,I_{\langle d\rangle})_{i+j}\to \Tor_{i}^S(K,S/L(-d))_{i+j}\to ...$$
Since both $J_{\langle d \rangle}$ and $S/L(-d)$ have a $d$-linear
resolution, the above long exact sequence gives
\[\beta_i(I_{\langle d \rangle})= \beta_i(J_{\langle d \rangle})+\beta_i(S/L(-d))\]
Moreover the minimal free resolution of $S/L(-d)$ is given by Koszul complex, so
$$\beta_i(S/L(-d))={r\choose i}$$
Thus by (1),
 \begin{eqnarray*}
 \beta_{i,i+d}(I)&=&\beta_i(I_{\langle d\rangle})-\beta_i(\m I_{\langle d-1\rangle})\\
 & = &\beta_i(I_{\langle d\rangle})-\beta_i(\m J_{\langle d-1\rangle})\\
 & = &\beta_i(J_{\langle d\rangle})+{r\choose i}-\beta_i(\m J_{\langle d-1\rangle})\\
 & = &\beta_{i,i+d}(J)+{r\choose i}.
\end{eqnarray*}

Now consider $j=d+1$. By (2) we have the following exact sequence
\begin{eqnarray*}
&0&\to\Tor_{i+1}^S(K,J)_{i+d+1}  \to\Tor_{i+1}^S(K,I)_{i+d+1}\to\Tor_{i+1}^S(K,S/L(-d))_{i+d+1}\\
&&\to\Tor_{i}^S(K,J)_{i+d+1}\to\Tor_{i}^S(K,I)_{i+d+1}\to 0.
\end{eqnarray*}
So by the first part
\begin{eqnarray*}
\beta_{i,i+d+1}(I) & = & \beta_{i,i+d+1}(J)-{r\choose i+1}+\beta_{i+1,i+d+1}(J)+{r\choose i+1}-\beta_{i+1,i+d+1}(J)\\
& = & \beta_{i,i+d+1}(J).
\end{eqnarray*}
\end{proof}

\begin{corollary}\label{Betti linear}
Let $I$ be a homogeneous ideal with linear quotients with respect to   $f_1,...,f_m$  where $\{f_1,...,f_m\}$ is a
 minimal system of homogeneous generators for $I$. Let $n_p$ be the minimal number of homogeneous generators of $\langle f_1,..,f_{p-1}\rangle:f_p$ for $p=1,...,m$.
 Then
\[ \reg(I)=\max \{ \deg (f_p):p=1, \ldots ,m\}  \]
$$\projdim(I)=\max\{n_p|1\leq p\leq m\}$$
 $$\beta_{i,i+j}(I)=\sum_{1\leq p\leq m,\hspace{.2cm} \deg(f_p)=j}{n_p\choose i}$$
$$\beta_i(I)=\sum_{p=1}^m{n_p\choose i}.$$
\end{corollary}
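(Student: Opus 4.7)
The plan is to deduce this corollary by iterated application of Theorem \ref{Betti} along the ascending chain $I_p=\langle f_1,\dots,f_p\rangle$ for $p=1,\dots,m$, with $I_m=I$.

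First I would set up the induction. The ideal $I_1=\langle f_1\rangle$ is principal, so it has a $\deg(f_1)$-linear resolution and is therefore componentwise linear, with $\beta_{0,\deg(f_1)}(I_1)=1$ and all other graded Betti numbers zero. Setting $n_1=0$, this matches the claimed formula since $\binom{0}{0}=1$ and $\binom{0}{i}=0$ for $i\geq 1$. For the inductive step, assume $I_{p-1}$ is componentwise linear and that the Betti number formula
\[
\beta_{i,i+j}(I_{p-1})=\sum_{\substack{1\leq q\leq p-1\\ \deg(f_q)=j}}\binom{n_q}{i}
\]
holds. Because $I$ has linear quotients with respect to $f_1,\dots,f_m$, the colon ideal $I_{p-1}:f_p$ is generated by $n_p$ linear forms, and since $\{f_1,\dots,f_m\}$ is a minimal system of generators of $I$, the set $\{f_1,\dots,f_p\}$ is a minimal system of generators of $I_p$. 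So the hypotheses of Theorem \ref{componentwise} and Theorem \ref{Betti} apply to the pair $(I_{p-1},I_p)$: componentwise linearity passes from $I_{p-1}$ to $I_p$, and
\[
\beta_{i,i+j}(I_p)=\beta_{i,i+j}(I_{p-1})+\binom{n_p}{i}\delta_{j,\deg(f_p)},
\]
which by the inductive hypothesis gives the claimed formula for $I_p$. Taking $p=m$ proves the third equality of the corollary.

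Next, the fourth equality follows immediately from the third by summing over all degrees~$j$, since $\beta_i(I)=\sum_j \beta_{i,i+j}(I)$. For the regularity, recall $\reg(I)=\max\{j: \beta_{i,i+j}(I)\neq 0 \text{ for some }i\}$; the formula for $\beta_{i,i+j}(I)$ shows that this maximum is attained precisely at $j=\max\{\deg(f_p):p=1,\dots,m\}$, since for each $p$ the term $\binom{n_p}{0}=1$ contributes nontrivially in degree $\deg(f_p)$. For the projective dimension, $\projdim(I)$ is the largest $i$ for which some $\beta_i(I)\neq 0$; from $\beta_i(I)=\sum_p\binom{n_p}{i}$ and the vanishing $\binom{n_p}{i}=0$ for $i>n_p$, this largest index is exactly $\max\{n_p:1\leq p\leq m\}$.

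There is essentially no technical obstacle here beyond verifying that Theorem \ref{Betti} is applicable at every step; the only mild subtlety is the bookkeeping of the base case $p=1$ with the convention $n_1=0$ and $\langle\rangle:f_1=0$, which makes the empty sum and the binomial $\binom{0}{i}$ behave correctly so that the inductive formula starts from the right place.
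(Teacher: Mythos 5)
Your proposal is correct and follows essentially the same route as the paper: induction on $p$ along the chain $I_p=\langle f_1,\dots,f_p\rangle$, using Corollary \ref{linear quotient} (componentwise linearity) and Theorem \ref{Betti} at each step, then reading off $\reg$, $\projdim$ and $\beta_i$ from the resulting graded Betti number formula. The extra care you take with the base case $p=1$ and with verifying minimality of $\{f_1,\dots,f_p\}$ as a generating set of $I_p$ is a fine, if routine, elaboration of what the paper leaves implicit.
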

\begin{proof}
It suffices to compute the graded Betti numbers of $I$. Let
$I_p=\langle f_1,...,f_p\rangle$. Using Corollary \ref{linear
quotient} and Theorem \ref{Betti} it is easy to find the Betti
numbers of each $I_p$ with inductive process on $p$.
\end{proof}

A stable ideal is a monomial ideal $J \subset S=K[x_1, \ldots, x_n]$ with the following property: if $u$ is a monomial belonging to $J$, $m(u):=\max \{k:x_k | u\}$ and $i<m(u)$, then $x_i(u/x_{m(u)})\in J$.\\  Suppose that $J$ is a stable ideal and the minimal monomial generators $u_1,...,u_m$ of $J$ are ordered so that  $i < q$ if and only if either the degree of $u_i$ is less than the degree of $u_q$, or the degrees are equal and
$u_i>_{\text{revlex}}u_q$. One can easily see that for $p\geq 1$ we have
$$\langle u_1, . . . ,u_{p-1}\rangle : u_p = \langle x_1,..., x_{m(u_p)-1} \rangle.$$
In particular $J$ has linear quotients.\\
The minimal free resolution of a stable ideal is given by the Eliahou-Kervaire resolution \cite{EK}. From this resolution one can easily find  the  regularity, the projective dimension and the Betti numbers of a stable ideal. The Eliahou-Kervaire formula for the graded Betti numbers of a stable ideal $J$ as above is the following:
$$\beta_{i,i+j}(J)=\sum_{1\leq p\leq m,\hspace{.2cm} \deg(u_p)=j}{m(u_p)-1\choose i}$$
From the above discussion  it is clear that Corollary \ref{Betti
linear} generalizes the Eliahou-Kervaire formula to all the
homogenous ideals with linear quotients.

\vspace{5mm}

{\it Acknowledgments.}
The heart of this paper was born during the PRAGMATIC school which took place in Catania during the summer of 2008.\\
So we would like to thank our teachers J\"{u}rgen Herzog and Volkmar Welker, who let us know beautiful aspects of  mathematics and many interesting open problems.\\
We are grateful to organizers of PRAGMATIC, especially to Alfio Ragusa and Giuseppe Zappal\`a for organizing the school.\\
Finally we also thank Aldo Conca for valuable
discussions about the topics of the paper.


\begin{thebibliography}{MPT}
\bibitem[EK]{EK} S. Eliahou, M. Kervaire, {\it Minimal resolutions of some monomial ideals}, J. Algebra, 129 (1990), no.1, pp. 1-25.
\bibitem[HH1]{HH} J. Herzog, T. Hibi, {\it Componentwise linear ideals}, Nagoya Mathematical Journal, 153, (1999), pp. 141-153.
\bibitem[HH2]{HH2} J. Herzog, T. Hibi, {\it Monomials}, a book in
preparation.
\bibitem[HT]{HT} J. Herzog, Y. Takayama, {\it Resolutions by mapping cones}, Homology Homotopy Appl., 4 (2002), no. 2, part 2, pp. 277-294.
\bibitem[SZ]{SZ} A. Soleyman Jahan,  X. Zheng, {\it Pretty clean monomial ideals and linear quotients},  to appear in  J Comb. Ser. A.




\end{thebibliography}
 \end{document}